\providecommand{\U}[1]{\protect\rule{.1in}{.1in}}
\providecommand{\U}[1]{\protect\rule{.1in}{.1in}}
\providecommand{\U}[1]{\protect\rule{.1in}{.1in}}
\theoremstyle{plain}
\newtheorem{theorem}{Theorem}[section]
\newtheorem{definition}[theorem]{Definition}
\numberwithin{equation}{section}
\begin{document}
\title{A unified Pietsch domination theorem}
\author{Geraldo Botelho, Daniel Pellegrino and Pilar Rueda}
\thanks{The second author is supported by CNPq Grants 471054/2006-2 and 308084/2006-3.
The third author is supported by MEC and FEDER Project MTM2005-08210.}

\begin{abstract}
In this paper we prove an abstract version of Pietsch's domination theorem which unify a number of known
Pietsch-type domination theorems for classes of mappings that generalize the ideal of absolutely $p$-summing
linear operators. A final result shows that Pietsch-type dominations are totally free from algebraic
conditions, such as linearity, multilinearity, etc.

\end{abstract}
\maketitle


\section{Introduction}




Pietsch's domination theorem (PDT) is a cornerstone in the theory of absolutely summing linear operators.
In this paper we prove an abstract version of PDT which has a twofold purpose.\\
\indent On the one hand, as expected, a Pietsch-type domination theorem turned out to be a basic result in
the several (linear and non-linear) theories which generalize and extend the linear theory of absolutely
summing operators. The canonical approach is as follows: (i) a class of (linear or non-linear) mappings
between (normed, metric) spaces is defined, (ii) a Pietsch-type domination theorem is proved, (iii) when
restricted to the class of linear operators the class defined in (i) is proved to coincide with the ideal of
absolutely $p$-summing linear operators and the theorem proved in (ii) recovers the classical PDT, (iv) the
new class is studied on its own and often compared with other related classes. Our point is that these
several PDTs are always proved using the same argument. In section 2 we construct an abstract setting where
this canonical argument yields a unified version of PDT which comprises, as we show in section 3, all the
known (as far as we know) domination theorems as particular cases.
\newline\indent The idea behind this first purpose is to avoid the repetition of the canonical argument whenever a new class is
introduced. Among the results that are unified by our main theorem (Theorem \ref{ds}) we mention: the classic
PDT for absolutely summing linear operators \cite[Theorem 2.12]{DJT}, the Farmer and Johnson domination
theorem for Lipschitz summing mappings between metric spaces \cite[Theorem 1(2)]{FaJo}, the Pietsch and Geiss
domination theorem for dominated multilinear mappings (\cite[Theorem 14]{Pie}, \cite[Satz 3.2.3]{Geiss}), the
Dimant domination theorem for strongly summing multilinear mappings and homogeneous polynomials
\cite[Proposition 1.2(ii) and Proposition 3.2(ii)]{Dimant}, the domination theorem for
$\alpha$-subhomogeneous mappings \cite[Theorem 2.4]{BPR}, the Martin\'{e}z-Gim\'{e}nez and S\'{a}nchez
P\'{e}rez domination theorem for $(D,p)$-summing operators \cite[Theorem 3.11]{SP}, etc.\\
\indent On the other hand, we note that several of the known domination theorems rely on certain algebraic
conditions of the mappings in question, such as linear operators, multilinear mappings, homogeneous
polynomials, etc. Our second purpose is to show that the Pietsch-type domination does not depend on any
algebraic condition. In fact, in section 4 we introduce the notion of absolutely summing arbitrary mapping
just mimicking the linear definition and we apply our unified PDT proved in section 2 to show that, even for
arbitrary mappings, being absolutely summing is equivalent to satisfy a PDT. In summary, this second purpose
is to show that Pietsch-type dominations are algebra-free.

\section{The main result}

Let $X$, $Y$ and $E$ be (arbitrary) sets, $\mathcal{H}$ be a family of
mappings from $X$ to $Y$, $G$ be a Banach space and $K$ be a compact Hausdorff
topological space. Let
\[
R \colon K\times E \times G\longrightarrow\lbrack0,\infty)~\mathrm{and~} S
\colon{\mathcal{H}}\times E \times G\longrightarrow\lbrack0,\infty)
\]
be mappings so that

\noindent$\bullet$ There is $x_{0} \in E$ such that
\[
R(\varphi, x_{0}, b) = S(f, x_{0}, b) = 0
\]
for every $\varphi\in K$ and $b \in G$.\newline\noindent$\bullet$ The mapping
\[
R_{x,b} \colon K \longrightarrow[0, \infty)~,~R_{x,b}(\varphi) =
R(\varphi,x,b)
\]
is continuous for every $x \in E$ and $b \in G$.\newline\noindent$\bullet$ It
holds that
\[
R\left(  \varphi,x,\eta b\right)  \leq\eta R\left(  \varphi,x,b\right)
~\mathrm{and~} \eta S(f,x,b) \leq S(f,x,\eta b)
\]
for every $\varphi\in K, x \in E, 0 \leq\eta\leq1, b\in G$ and $f
\in{\mathcal{H}}$.

\begin{definition}\rm
\textrm{\textrm{Let $R$ and $S$ be as above and $0<p<\infty$. A mapping
$f\in{\mathcal{H}}$ is said to be \textit{$R$-$S$-abstract $p$-summing} is
there is a constant $C_{1}>0$ so that%
\begin{equation}
\sum_{j=1}^{m}S(f,x_{j},b_{j})^{p}\leq C_{1}\sup_{\varphi\in K}\sum_{j=1}%
^{m}R\left(  \varphi,x_{j},b_{j}\right)  ^{p},\label{cam-errado}%
\end{equation}
for all $x_{1},\ldots,x_{m}\in E,$ $b_{1},\ldots,b_{m}\in G$ and
$m\in\mathbb{N}$. The infimum of such constants $C_{1}$ is denoted by
$\pi_{RS,p}(f)$. }}
\end{definition}

It is not difficult to show that the infimum of the constants above is
attained, i.e., $\pi_{RS,p}(f)$ satisfies (\ref{cam-errado}).\textrm{ }

\begin{theorem}
\label{Unified} \label{ds}Let $R$ and $S$ be as above, $0<p<\infty$ and
$f\in{\mathcal{H}}$. Then $f$ is $R$-$S$-abstract $p$-summing if and only if
there is a constant $C>0$ and a regular Borel probability measure $\mu$ on $K$
such that%
\begin{equation}
S(f,x,b)\leq C\left(  \int_{K}R\left(  \varphi,x,b\right)  ^{p}d\mu\left(
\varphi\right)  \right)  ^{\frac{1}{p}} \label{2}%
\end{equation}
for all $x\in E$ and $b\in G.$ Moreover, the infimum of such constants $C$
equals $\pi_{RS,p}(f)^{\frac{1}{p}}$.
\end{theorem}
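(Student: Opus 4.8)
The plan is to prove the two implications separately, with the forward (sufficiency) direction being routine and the converse (necessity) direction being the heart of the matter, handled by a Hahn--Banach separation in $C(K)$ that mimics the classical Pietsch argument.

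For the easy direction, I would assume the domination \eqref{2} holds with constant $C$ and probability measure $\mu$. Then for arbitrary finite data $x_1,\dots,x_m\in E$ and $b_1,\dots,b_m\in G$, raising \eqref{2} to the power $p$, summing over $j$, interchanging the finite sum with the integral, and bounding the integrand by its supremum over $K$ gives
\[
\sum_{j=1}^{m}S(f,x_j,b_j)^p\leq C^p\int_K\sum_{j=1}^{m}R(\varphi,x_j,b_j)^p\,d\mu(\varphi)\leq C^p\sup_{\varphi\in K}\sum_{j=1}^{m}R(\varphi,x_j,b_j)^p.
\]
Hence $f$ is $R$-$S$-abstract $p$-summing with $\pi_{RS,p}(f)\leq C^p$, i.e.\ $\pi_{RS,p}(f)^{1/p}\leq C$; taking the infimum over admissible $C$ yields $\pi_{RS,p}(f)^{1/p}\leq\inf C$.

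For the converse, set $C_1:=\pi_{RS,p}(f)$. For each finite choice of data $D=(x_j,b_j)_{j=1}^{m}$ define $g_D\in C(K)$ by $g_D(\varphi)=\sum_{j=1}^{m}\big(S(f,x_j,b_j)^p-C_1\,R(\varphi,x_j,b_j)^p\big)$, which is continuous because each $R_{x_j,b_j}$ is continuous. Let $\mathcal{C}$ be the set of all $g\in C(K)$ that lie pointwise below some $g_D$. The summing inequality \eqref{cam-errado} says exactly that $\min_{\varphi}g_D\leq 0$ for every $D$, so every element of $\mathcal{C}$ fails to be strictly positive, whence $\mathcal{C}$ is disjoint from the open convex cone $P=\{h\in C(K):h>0\}$. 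I expect the \emph{main obstacle} to be showing that $\mathcal{C}$ is convex: in the classical theorem this is immediate from exact $p$-homogeneity, but here I only have the one-sided inequalities $\eta S(f,x,b)\leq S(f,x,\eta b)$ and $R(\varphi,x,\eta b)\leq\eta R(\varphi,x,b)$. The key observation is that these are arranged in precisely the direction needed: rescaling $b_j\mapsto\lambda^{1/p}b_j$ (with $0\leq\lambda\leq1$) forces $S(f,x_j,\lambda^{1/p}b_j)^p\geq\lambda\,S(f,x_j,b_j)^p$ and $R(\varphi,x_j,\lambda^{1/p}b_j)^p\leq\lambda\,R(\varphi,x_j,b_j)^p$, so that $g_{(x_j,\lambda^{1/p}b_j)}\geq\lambda\,g_{(x_j,b_j)}$ pointwise. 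Concatenating the rescaled data of two elements then shows $\lambda g_{D_1}+(1-\lambda)g_{D_2}\leq g_{D'}$ for a suitable $D'$, which gives convexity of $\mathcal{C}$.

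With convexity in hand, the separating hyperplane theorem produces a nonzero $\psi\in C(K)^{*}$ and a scalar $\alpha$ with $\psi(g)\leq\alpha\leq\psi(h)$ for all $g\in\mathcal{C}$, $h\in P$. Using the distinguished point $x_0$, the first hypothesis gives $R(\varphi,x_0,b)=S(f,x_0,b)=0$, so $g_{(x_0,b)}\equiv 0\in\mathcal{C}$; combined with testing $\psi$ against $\varepsilon\cdot\mathbf{1}$ and letting $\varepsilon\to0^{+}$, this pins down $\alpha=0$. Thus $\psi\geq 0$ on $P$, and by density $\psi\geq 0$ on all nonnegative functions, so by the Riesz representation theorem $\psi$ is a positive regular Borel measure on $K$, which I normalize to a probability measure $\mu$. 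Finally, applying $\psi(g)\leq 0$ to the single-pair function $g_{(x,b)}$ yields $S(f,x,b)^p\leq C_1\int_K R(\varphi,x,b)^p\,d\mu(\varphi)$ for every $x\in E$, $b\in G$, i.e.\ \eqref{2} with $C=C_1^{1/p}=\pi_{RS,p}(f)^{1/p}$. This gives $\inf C\leq\pi_{RS,p}(f)^{1/p}$, and together with the easy direction establishes both the equivalence and the equality of constants.
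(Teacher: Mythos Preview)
Your proposal is correct and follows essentially the same approach as the paper: both arguments hinge on the same one-sided scaling inequalities to show that convex combinations $\lambda g_{D_1}+(1-\lambda)g_{D_2}$ are pointwise dominated by some $g_{D'}$, followed by a Hahn--Banach separation from the open cone of strictly positive functions, the use of $x_0$ to pin the separating constant at $0$, and normalization via Riesz representation. The only cosmetic difference is packaging: the paper takes the convex hull of $\{g_D\}$ and uses the scaling trick to show each element attains a nonpositive value, whereas you take the downward order-closure of $\{g_D\}$ and use the same trick to show convexity---these are equivalent reformulations of the identical core computation.
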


\begin{proof}
Assume the existence of such a measure $\mu$. Then, given $m\in\mathbb{N}$,
$x_{1},\ldots,x_{m}\in E$ and $b_{1},\ldots,b_{m}\in G$,
\begin{align*}
\sum_{j=1}^{m}S(f,x_{j},b_{j})^{p}  &  \leq C^{p}\sum_{j=1}^{m}\int
_{K}R\left(  \varphi,x_{j},b_{j}\right)  ^{p}d\mu\left(  \varphi\right) \\
&  =C^{p}\int_{K}\sum_{j=1}^{m}R\left(  \varphi,x_{j},b_{j}\right)  ^{p}%
d\mu\left(  \varphi\right) \\
&  \leq C^{p}\sup_{\varphi\in K}\sum_{j=1}^{m}R\left(  \varphi,x_{j}%
,b_{j}\right)  ^{p}.
\end{align*}
Hence $f$ is $R$-$S$-abstract $p$-summing with $\pi_{RS,p}(f)\leq C^{p}.$

Conversely, suppose that $f\colon E\longrightarrow F$ is $R$-$S$-abstract
$p$-summing. Consider the Banach space $C(K)$ of continuous real-valued
functions on $K.$ For every finite set $M=\{(x_{1},b_{1}),\ldots,(x_{k}%
,b_{k})\}\subset E\times G,$ let%
\[
\Psi_{M}\colon K\longrightarrow\mathbb{R}~,~\Psi_{M}\left(  \varphi\right)
=\sum_{(x,b)\in M}\left(  S(f,x,b)^{p}-\pi_{RS,p}(f)R\left(  \varphi
,x,b\right)  ^{p}\right)  .
\]
It is convenient to regard $M$ as a finite sequence of elements of $E\times G$
rather than a finite set (that is, repetitions are allowed). Since the
functions $R_{x,b}\colon K\longrightarrow\lbrack0,\infty)$, $R_{x,b}%
(\varphi)=R(\varphi,x,b)$, are continuous, it is plain that $\Psi_{M}\in
C\left(  K\right)  $.\newline\indent Let $\mathcal{G}$ be the set of all such
$\Psi_{M}$ and $\mathcal{F}$ be its convex hull. Let us show that for every
$\Psi\in\mathcal{F}$ there is $\varphi_{\Psi}\in K$ such that $\Psi\left(
\varphi_{\Psi}\right)  \leq0$: given $\Psi\in\mathcal{F}$, there are
$k\in\mathbb{N}$, $\lambda_{1},\ldots,\lambda_{k}\in\lbrack0,1]$, $\sum
_{j=1}^{k}\lambda_{j}=1$, and $\Psi_{M_{1}},\ldots,\Psi_{M_{k}}\in
{\mathcal{G}}$ so that $\Psi=\sum_{j=1}^{k}\lambda_{j}\Psi_{M_{j}}$.

Define
\[
M_{\Psi}=\bigcup_{j=1}^{k}\left\{  \left(  x,\lambda_{j}^{\frac{1}{p}%
}b\right)  ;\text{ }(x,b)\in M_{j}\right\}
\]
and choose $\varphi_{\Psi}\in K$ so that
\[
\sup_{\varphi\in K}\sum_{(x,b)\in M_{\Psi}}R\left(  \varphi,x,b\right)
^{p}=\sum_{(x,b)\in M_{\Psi}}R\left(  \varphi_{\Psi},x,b\right)  ^{p}.
\]
Notice that such a $\varphi_{\Psi}$ exists since $K$ is compact and $R_{x,b}$
is continuous. Then
\begin{align*}
\Psi\left(  \varphi_{\Psi}\right)   &  =\sum_{j=1}^{k}\lambda_{j}\Psi_{M_{j}%
}\left(  \varphi_{\Psi}\right) \\
&  =\sum_{j=1}^{k}\lambda_{j}\sum_{(x,b)\in M_{j}}\left(  S(f,x,b)^{p}%
-\pi_{RS,p}(f)R\left(  \varphi_{\Psi},x,b\right)  ^{p}\right) \\
&  \leq\sum_{j=1}^{k}\sum_{(x,b)\in M_{j}}\left(  \left(  S(f,x,\lambda
_{j}^{\frac{1}{p}}b)\right)  ^{p}-\pi_{RS,p}(f)R\left(  \varphi_{\Psi
},x,\lambda_{j}^{\frac{1}{p}}b\right)  ^{p}\right) \\
&  \overset{(1)}{=}\sum_{(x,w)\in M_{\Psi}}\left(  S(f,x,w)^{p}-\pi
_{RS,p}(f)R\left(  \varphi_{\Psi},x,w\right)  ^{p}\right) \\
&  =\Psi_{M_{\Psi}}\left(  \varphi_{\Psi}\right)  .
\end{align*}
We have been considering finite sequences instead of finite sets precisely for equality (1) to hold true.
Using (\ref{cam-errado}) we obtain $\Psi_{M_{\Psi}}\left( \varphi_{\Psi}\right)  \leq0$ and therefore
\begin{equation}
\Psi\left(  \varphi_{\Psi}\right)  \leq0. \label{5}%
\end{equation}

Let%
\[
\mathcal{P}=\left\{  f\in C\left(  K\right)  ;f\left(  \varphi\right)
>0\text{ for all }\varphi\in K\right\}  .
\]
It is clear that $\mathcal{P}$ is non-void (every constant positive map
belongs to $\mathcal{P}$), open and convex. From the definition of
$\mathcal{P}$ and (\ref{5}) it follows that $\mathcal{P}\cap\mathcal{F}%
=\emptyset$. So, by Hahn-Banach Theorem there are $\mu_{1}\in C(K)^{\ast}$ and
$L>0$ such that
\begin{equation}
\mu_{1}\left(  g\right)  \leq L<\mu_{1}\left(  h\right)  \label{L}%
\end{equation}
for all $g\in\mathcal{F}$ and $h\in\mathcal{P}$.

If $x_{0}\in E$ is such that $R(\varphi,x_{0},b)=S(f,x_{0},b)=0$ for every
$\varphi\in K$ and $b\in G$, then
\[
0=S(f,x_{0},b)^{p}-\pi_{RS,p}(f)R\left(  \varphi,x_{0},b\right)  ^{p}%
=\Psi_{\left\{  (x_{0},b)\right\}  }\in\mathcal{F}\text{.}%
\]
Hence $0=\mu_{1}\left(  0\right)  \leq L$. As the constant functions $\frac {1}{k}$ belong to $\mathcal{P}$
for all $k\in\mathbb{N}$, it follows from (\ref{L}) that $0\leq L<\mu_{1}\left( \frac{1}{k}\right) $. Making
$k \longrightarrow + \infty$ we get $L=0$. Therefore
(\ref{L}) becomes%
\begin{equation}
\mu_{1}\left(  g\right)  \leq0<\mu_{1}\left(  h\right)  \label{0}%
\end{equation}
for all $g\in\mathcal{F}$ and all $h\in\mathcal{P}.$

Using the continuity of $\mu_{1},$ we conclude that $\mu_{1}\left(  h\right)
\geq0$ whenever $h\geq0$ and then we can consider $\mu_{1}$ a positive regular
Borel measure on $K$.

Defining
\[
\mu:=\frac{1}{\mu_{1}\left(  K\right)  }\mu_{1}%
\]
it is plain that $\mu$ is a regular probability measure on $K$, and from (\ref{0}) we get
\[
\int_{K}g\left(  \varphi\right)  \,d\mu\left(  \varphi\right)  \leq0
\]
for all $g\in\mathcal{F}$. In particular, for each $x,b$ we have
$\Psi_{\left\{  (x,b)\right\}  }\in\mathcal{F},$ and
\[
\int_{K}\Psi_{\left\{  (x,b)\right\}  }\left(  \varphi\right)  d\mu\left(
\varphi\right)  \leq0.
\]
So%
\[
\int_{K}\left(  S(f,x,b)^{p}-\pi_{RS,p}(f)R\left(  \varphi,x,b\right)
^{p}\right)  d\mu\left(  \varphi\right)  \leq0,
\]
and then%
\[
S(f,x,b)^{p}\leq\pi_{RS,p}(f)\int_{K}R\left(  \varphi,x,b\right)  ^{p}%
d\mu\left(  \varphi\right)  .
\]
Taking $p$-roots the result follows.
\end{proof}

\section{Recovering the known domination theorems}

In this section we show how Theorem \ref{Unified} can be easily invoked in order to obtain, as simple
corollaries, all known domination theorems (to the best of our knowledge) that have appeared in the several
different generalizations of the concept of absolutely $p$-summing linear operator. Given one of such classes
of {\it absolutely summing} mappings, it is easy to see that for convenient choices of $X$, $Y$, $E$, $G$,
$\mathcal{H}$, $K$, $R$ and $S$, for a mapping to belong to the class is equivalent to be $R$-$S$ abstract
summing mapping and that, in this case, the corresponding domination theorem that holds for this class is
nothing but Theorem \ref{Unified}.

\subsection{Pietsch's domination theorem for absolutely $p$-summing linear
operators}

\medskip Consider $X=E$ and $Y$ Banach spaces, $x_{0}=0$, $K=B_{E^{\prime}}$
and $G=\mathbb{K}$ (the scalar field). Take $\mathcal{H}=L(X;Y)$ the space of all linear operators from $X$
into $Y$ and define $R$ and $S$ by:
\[
R\colon B_{E^{\prime}}\times X\times\mathbb{K} \longrightarrow[0,\infty) ~,~
R(\varphi,x,\lambda)=\vert\lambda\vert\vert\varphi(x)\vert
\]
\[
S\colon L(X;Y)\times X\times\mathbb{K} \longrightarrow[0,\infty) ~,~
S(T,x,\lambda)=\vert\lambda\vert\Vert T(x)\Vert.
\]
With $R$ and $S$ so defined and any $0<p<\infty$, a linear operator $T\colon X\longrightarrow Y$ is $R$-$S$
abstract $p$-summing if and only if it is absolutely $p$-summing and Theorem \ref{Unified} becomes the
classical and well-known Pietsch domination theorem \cite[Theorem 2.12]{DJT}.

\subsection{The domination theorem for Lipschitz $p$-summing mappings}

Lipschitz $p$-summing mappings have been introduced by Farmer and Johnson \cite{FaJo} and are exactly the
$R$-$S$ abstract $p$-summing mappings for $X$ and $Y$ being metric spaces, $E=X\times X\times\mathbb{R}$,
$x_{0}=(x,x,0)$ (where $x$ is any point in $X$), $G=\mathbb{R}$, $K=B_{X}^{\#}$ the unit ball of the
Lipschitz dual $X^{\#}$ of $X$, $\mathcal{H}$ the set of all mappings from $X$ to $Y$ and $R$ and $S$ being
defined as follows:
\[
R\colon B_{X^{\#}} \times(X\times X\times\mathbb{R})\times\mathbb{R}
\longrightarrow[0,\infty) ~,~ R(f,(x,y,a),\lambda)=\vert a\vert^{1/p}%
\vert\lambda\vert\vert f(x)-f(y)\vert
\]
\[
S\colon{\mathcal{H}}\times(X\times X\times\mathbb{R})\times\mathbb{R}
\longrightarrow[0,\infty)~,~ S(T,(x,y,a),\lambda)=\vert a\vert^{1/p}%
\vert\lambda\vert\Vert T(x)-T(y)\Vert,
\]
where $\Vert T(x)-T(y)\Vert$ means the distance from $T(x)$ to $T(y)$ in $Y$.

In this context Theorem \ref{Unified} coincides with the domination theorem
for Lipschitz $p$-summing mappings \cite[Theorem 1(2)]{FaJo}.

\subsection{The domination theorem for dominated multilinear mappings and
polynomials}

Dominated multilinear mappings were introduced by Pietsch \cite{Pie} and dominated polynomials by Matos
\cite{anais}. Given $n\in \mathbb{N}$,
let $X_{1}, \ldots,X_{n}, Y$ be Banach spaces and consider $X=E=X_{1}%
\times\cdots\times X_{n}$, ${\mathcal{H}}=L(^{n}X_{1}, \ldots,X_{n};Y)$ the
space of all $n$-linear mappings from $X_{1}\times\cdots\times X_{n}$ to $Y$,
$x_{0}=(0,\ldots,0)$, $K=B_{(X_{1}\times\cdots\times X_{n})^{\prime}}$,
$G=\mathbb{K}$ and $R$ and $S$ defined by:

\noindent$R\colon B_{(X_{1}\times\cdots\times X_{n})^{\prime}}\times
(X_{1}\times\cdots\times X_{n})\times\mathbb{K} \longrightarrow[0,\infty),$
\[
\ \ \ \ R(\varphi,(x_{1},\ldots,x_{n}),\lambda)=\vert\lambda\vert\vert
\varphi(x_{1},\ldots,x_{n})\vert
\]

\noindent$S\colon L(^{n}X_{1},\ldots,X_{n};Y)\times(X_{1}\times\cdots\times
X_{n})\times\mathbb{K}\longrightarrow\lbrack0,\infty),$
\[
S(T,(x_{1},\ldots,x_{n}),\lambda)=|\lambda|\Vert T(x_{1},\ldots,x_{n}%
)\Vert^{1/n}.
\]
Then, given any $0<p<\infty$, an $n$-linear mapping  $T\colon X_{1}\times\cdots\times X_{n}\longrightarrow Y$
is $p$-dominated if and only if it is $R$-$S$ abstract $p$-summing. Moreover, in this setting Theorem
\ref{Unified} recovers the domination as it appears in \cite[Theorem 3.2(D)]{BPR}. In this same reference one
can learn how the latter domination theorem leads to the standard domination theorem for dominated
multilinear mappings that first appeared in \cite[Theorem 14]{Pie} (a direct
proof can be found in \cite[Satz 3.2.3]{Geiss}).\\
\indent For dominated polynomials, a repetition of the above application of Theorem \ref{Unified} to the
corresponding symmetric multilinear mappings yields the following characterization: an $n$-homogeneous
polynomial $P \colon X \longrightarrow Y$ is $p$-dominated if and only if there is a constant $C>0$ and a
regular Borel probability measure $\mu$ on $B_{(X^n)'}$ such that\begin{equation*} \|P(x)\|\leq C\left(
\int_{B_{(X^n)'}} |\varphi(x,\ldots,x)|^{p}d\mu\left( \varphi\right)  \right)  ^{\frac{n}{p}}
\label{2}\end{equation*} for every $x\in X$. As in \cite{BPR}, this characterization leads to the standard
domination theorem for dominated polynomials as it appears in \cite[Proposition 3.1]{anais}.

\subsection{The domination theorem for strongly $p$-summing
multilinear mappings and homogeneous polynomials}

Strongly $p$-summing mappings were introduced by Dimant \cite{Dimant}. For multilinear mappings, given
$n\in\mathbb{N}$, strongly $p$-summing $n$-linear mappings are particular cases of $R$-$S$ abstract
$p$-summing mappings considering $X_{1}, \ldots,X_{n}, Y$ Banach spaces, $X=E=X_{1}\times\cdots\times X_{n}$,
${\mathcal{H}}=L(^{n}X_{1}, \ldots,X_{n};Y)$ the space of all $n$-linear mappings from
$X_{1}\times\cdots\times X_{n}$ to $Y$, $x_{0}=(0,\ldots,0)$,
$K=B_{(X_{1}\hat\otimes_{\pi}\cdots\hat\otimes_{\pi}X_{n})^{\prime}}$, $G=\mathbb{K}$ and $R$ and $S$ defined
by:

\noindent$R\colon B_{(X_{1}\hat\otimes_{\pi}\cdots\hat\otimes_{\pi}%
X_{n})^{\prime}}\times(X_{1}\times\cdots\times X_{n})\times\mathbb{K}
\longrightarrow[0,\infty), $
\[
\ \ \ \ R(\varphi,(x_{1},\ldots,x_{n}),\lambda)=\vert\lambda\vert\vert
\varphi(x_{1}\otimes\cdots\otimes x_{n})\vert
\]

\noindent$S\colon L(^{n}X_{1}, \ldots,X_{n};Y)\times(X_{1}\times\cdots\times
X_{n})\times\mathbb{K} \longrightarrow[0,\infty), $
\[
S(T,(x_{1},\ldots,x_{n}),\lambda)=\vert\lambda\vert\Vert T(x_{1},\ldots
,x_{n})\Vert.
\]
In that case, Theorem \ref{Unified} recovers the corresponding domination
theorem \cite[Proposition 1.2(ii)]{Dimant}.

On the other hand, strongly $p$-summing $n$-homogeneous polynomials are obtained also as particular cases of
$R$-$S$ abstract
$p$-summing mappings. Just take $E=X, Y$ Banach spaces, ${\mathcal{H}%
}={P}(^{n}X;Y)$ the space of all $n$-homogeneous polynomials
from $X$ to $Y$, $x_{0}=0$, $K=B_{(\hat\otimes_{\pi}X)^{\prime}}%
=B_{{\mathcal{P}}(^{n}X)}$, $G=\mathbb{K}$ and $R$ and $S$ defined by:

\noindent$R\colon B_{{\mathcal{P}}(^{n}X)}\times X\times\mathbb{K}
\longrightarrow[0,\infty), $
\[
\ \ \ \ R(Q,x,\lambda)=\vert\lambda\vert\vert Q(x)\vert
\]

\noindent$S\colon{P}(^{n}X;Y)\times X\times\mathbb{K} \longrightarrow
[0,\infty), $
\[
S(P,x,\lambda)=\vert\lambda\vert\Vert P(x)\Vert.
\]
As expected, Theorem \ref{Unified} also recovers the corresponding domination theorem for strongly
$p$-summing $n$-homogeneous polynomials \cite[Theorem 3.2(ii)]{Dimant}.

\subsection{The domination theorem for $p$-semi-integral multilinear mappings}

The class of $p$-semi-integral multilinear mappings was introduced in \cite{Pell, CP} and are particular
cases of $R$-$S$ abstract $p$-summing mappings considering $n\in \mathbb{N}$, $X_{1},\ldots,X_{n},Y$ Banach
spaces, $E=X_{1}\times\cdots\times X_{n}$, ${\mathcal{H}}=L(^{n}X_{1},\ldots,X_{n};Y)$ the space of all
$n$-linear mappings from $X_{1}\times\cdots\times X_{n}$ to $Y$, $x_{0}=(0,\ldots,0)$,
$K=B_{X_{1}^{\prime}\times\cdots\times X_{n}^{\prime}}$, $G=\mathbb{K}$ and $R$ and $S$ defined by:

\noindent$R\colon B_{B_{X_{1}^{\prime}\times\cdots\times X_{n}^{\prime}}%
}\times(X_{1}\times\cdots\times X_{n})\times\mathbb{K}\longrightarrow
\lbrack0,\infty),$
\[
\ \ \ \ R(\varphi,(x_{1},\ldots,x_{n}),\lambda)=|\lambda||\varphi(x_{1}%
)\cdots\varphi(x_{n})|
\]

\noindent$S\colon L(^{n}X_{1},\ldots,X_{n};Y)\times(X_{1}\times\cdots\times
X_{n})\times\mathbb{K}\longrightarrow\lbrack0,\infty),$
\[
S(T,(x_{1},\ldots,x_{n}),\lambda)=|\lambda|\Vert T(x_{1},\ldots,x_{n})\Vert.
\]
In that case, Theorem \ref{Unified} recovers the corresponding domination theorem \cite[Theorem 1]{CP}.

\subsection{The domination theorem for $\tau(p)$-summing multilinear mappings}

The class of $\tau(p)$-summing multilinear mappings was introduced by X. Mujica in \cite{Xim} and is a
particular case of $R$-$S$ abstract $p$-summing mappings considering $n\in \mathbb{N}$, $X_{1},\ldots
,X_{n},Y$ Banach spaces, $E=X_{1}\times\cdots\times X_{n}$, ${\mathcal{H}%
}=L(^{n}X_{1},\ldots,X_{n};Y)$ the space of all $n$-linear mappings from
$X_{1}\times\cdots\times X_{n}$ to $Y$, $x_{0}=(0,\ldots,0)$, $K=B_{X_{1}%
^{\prime}}\times\cdots\times B_{X_{n}^{\prime}}\times B_{F^{\prime\prime}}$,
$G=F^{\prime}$ and $R$ and $S$ defined by:

\noindent$R\colon\left(  B_{X_{1}^{\prime}}\times\cdots\times B_{X_{n}%
^{\prime}}\right)  \times(X_{1}\times\cdots\times X_{n})\times F^{\prime
}\longrightarrow\lbrack0,\infty),$
\[
\ \ \ \ R((\varphi_{1},...,\varphi_{n},\varphi),(x_{1},\ldots,x_{n}%
),b)=|\varphi(x_{1})\cdots\varphi(x_{n})||\varphi(b)|
\]

\noindent$S\colon L(^{n}X_{1},\ldots,X_{n};Y)\times(X_{1}\times\cdots\times
X_{n})\times F^{\prime}\longrightarrow\lbrack0,\infty),$
\[
S(T,(x_{1},\ldots,x_{n}),\lambda)=\left\vert <b,T(x_{1},\ldots,x_{n}%
)>\right\vert .
\]
In that case, Theorem \ref{Unified} recovers the corresponding domination theorem \cite[Theorem 3.5]{Xim}.

\subsection{The domination theorem for subhomogeneous mappings}

The class of $\alpha$-subhomogeneous mappings was introduced in \cite{BPR} and
is a particular case of $R$-$S$ abstract $p$-summing mappings considering
$X,Y$ Banach spaces, $E=X$,
\[
{\mathcal{H}}=\{f:E\longrightarrow Y;\text{ }f\text{ is }\alpha
\text{-subhomogeneous and }f(0)=0\},
\]
$x_{0}=(0,\ldots,0)$, $K=B_{X^{\prime}}$, $G=\mathbb{K}$ and $R$ and $S$
defined by:

\noindent$R\colon B_{X^{\prime}}\times E\times\mathbb{K}\longrightarrow
\lbrack0,\infty),$
\[
\ \ \ \ R(\varphi,x,\eta)=\left\vert \eta\right\vert \left\vert \varphi
(x)\right\vert
\]

\noindent$S\colon{\mathcal{H}}\times E\times\mathbb{K}\longrightarrow
\lbrack0,\infty),$
\[
S(f,x,\eta)=\left\vert \eta\right\vert \left\Vert f(x)\right\Vert ^{1/\alpha}.
\]
In that case, Theorem \ref{Unified} recovers the corresponding domination theorem \cite[Theorem 2.4]{BPR}.

\subsection{The domination theorem for $(D,p)$-summing multilinear mappings}

The class of $(D,p)$-summing linear operators was introduced by F.
Mart\'{\i}nez-Gim\'{e}nez and E. A. S\'{a}nchez-P\'{e}rez in \cite{SP} and is
a particular case of $R$-$S$ abstract $p$-summing mappings considering $Y$ a
Banach space, $E$ a Banach function space compatible with a countably additive
vector measure $\lambda$ of the range dual pair $D=(\lambda,\lambda^{\prime}%
)$,
\[
{\mathcal{H}}=L(E;Y)
\]
the space of all linear mappings from $E$ to $Y$, $x_{0}=(0,\ldots,0)$,
$K=\overline{B}_{L_{1}(\lambda^{\prime})}$, $G=\mathbb{R}$ and $R$ and $S$
defined by:

\noindent$R\colon\overline{B}_{L_{1}(\lambda^{\prime})}\times E\times
\mathbb{R}\longrightarrow\lbrack0,\infty),$
\[
\ \ \ \ R(\phi,f,\eta)=\left\vert \eta\right\vert |(f,\phi)|
\]

\noindent$S\colon{\mathcal{H}}\times E\times\mathbb{R}\longrightarrow
\lbrack0,\infty),$
\[
S(T,f,\eta)=\left\vert \eta\right\vert \left\Vert T(f)\right\Vert .
\]
In that case, Theorem \ref{Unified} recovers the corresponding domination theorem \cite[Theorem 3.11]{SP}.

\section{Absolutely summing arbitrary mappings}

According to the usual definition of absolutely summing linear operators by means of inequalities, the
following definition is quite natural:

\begin{definition}\rm
\label{14nov08}Let $E$ and $F$ be Banach spaces. An arbitrary mapping $f\colon E\longrightarrow F$ is {\it
absolutely $p$-summing at $a\in E$} if there is a
$C\geq0$ so that%
\[%
{\displaystyle\sum\limits_{j=1}^{m}}
\left\Vert f(a+x_{j})-f(a)\right\Vert ^{p}\leq C\sup_{\varphi\in B_{E^{\prime
}}}%
{\displaystyle\sum\limits_{j=1}^{m}}
\left\vert \varphi(x_{j})\right\vert ^{p}%
\]
for every natural number $m$ and every $x_{1},\ldots,x_{m} \in E$. \bigskip
\end{definition}

As \cite[Theorem 3.5]{matos} makes clear, the above definition is actually an adaptation of
\cite[Definition 3.1]{matos}.\\
\indent We finish the paper applying Theorem \ref{Unified} once more to show that, even in the absence of
algebraic conditions, absolutely $p$-summing mappings are exactly those which enjoy a Pietsch-type
domination:
\begin{theorem}
Let $E$ and $F$ be Banach spaces. An arbitrary mapping $f\colon E\longrightarrow F$ is absolutely $p$-summing
at $a\in E$ if and only if there is a constant $C>0$
and a regular Borel probability measure $\mu$ on $B_{E^{\prime}}$ such that%
\begin{equation*}
\left\Vert f(a+x)-f(a)\right\Vert \leq C\left(  \int_{B_{E^{\prime}}%
}\left\vert \varphi(x)\right\vert ^{p}d\mu\left(  \varphi\right)  \right)
^{\frac{1}{p}}%
\end{equation*}
for all $x\in E.$
\end{theorem}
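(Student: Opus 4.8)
The plan is to obtain the statement as one more instance of Theorem \ref{Unified}. First I would specialize the abstract data of Section 2: take $X=E$ and $Y=F$, let $\mathcal{H}$ be the family of all mappings from $E$ to $F$ (so that $f\in\mathcal{H}$), identify the set denoted $E$ in Section 2 with the Banach space $E$ of the statement, and set $G=\mathbb{K}$, $x_{0}=0$ and $K=B_{E'}$ with its weak-star topology, which is compact Hausdorff by Banach--Alaoglu. Then I would put
\[
R(\varphi,x,\lambda)=|\lambda|\,|\varphi(x)| \qquad\text{and}\qquad S(g,x,\lambda)=|\lambda|\,\|g(a+x)-g(a)\|.
\]
Checking the three conditions imposed on $R$ and $S$ in Section 2 is routine: with $x_{0}=0$ one has $R(\varphi,0,\lambda)=|\lambda|\,|\varphi(0)|=0$ and $S(g,0,\lambda)=|\lambda|\,\|g(a)-g(a)\|=0$; each $R_{x,\lambda}$ is weak-star continuous because $\varphi\mapsto\varphi(x)$ is; and the two homogeneity estimates hold with equality, since both $R$ and $S$ are positively homogeneous in the scalar variable.

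With the framework in place, I would identify the two relevant classes of mappings. Setting every scalar $\lambda_{j}$ equal to $1$ in the defining inequality (\ref{cam-errado}) recovers exactly the inequality of Definition \ref{14nov08}; hence every $R$-$S$-abstract $p$-summing map is absolutely $p$-summing at $a$, with the same constant. For the converse I would reinstate the scalars: when the numbers $|\lambda_{j}|^{p}$ are rational, I replace each $(x_{j},\lambda_{j})$ by $n_{j}$ copies of $x_{j}$, apply Definition \ref{14nov08} to this repeated family, and divide by the common denominator to recover the weighted inequality (\ref{cam-errado}); since both sides of (\ref{cam-errado}) are continuous functions of the weights $(|\lambda_{j}|^{p})_{j}$, a density argument then handles arbitrary scalars. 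Thus the two notions coincide.

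It then remains only to read off the conclusion from Theorem \ref{Unified}: $f$ is $R$-$S$-abstract $p$-summing precisely when there are $C>0$ and a regular Borel probability measure $\mu$ on $B_{E'}$ with $S(f,x,\lambda)\le C\,(\int_{B_{E'}}R(\varphi,x,\lambda)^{p}\,d\mu(\varphi))^{1/p}$ for all $x\in E$ and $\lambda\in\mathbb{K}$. Specializing to $\lambda=1$ gives exactly the inequality in the statement, while conversely that inequality upgrades to all $\lambda$ by the homogeneity of $R$ and $S$ in $\lambda$. Chaining the equivalences yields the theorem.

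I expect the only non-formal step to be the converse half of the identification above — recovering the scalars $\lambda_{j}$ from the scalar-free Definition \ref{14nov08} — the remaining verifications being mechanical. Note that the scalar variable cannot simply be discarded here: the homogeneity requirement $R(\varphi,x,\eta b)\le\eta R(\varphi,x,b)$ with $\eta\to0$ forces $R$ to scale genuinely with $b$, which is precisely why the choice $G=\mathbb{K}$ and the factor $|\lambda|$ are needed, and why the non-homogeneity of $f$ makes the scalar step unavoidable rather than a mere reindexing.
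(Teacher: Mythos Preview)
Your proposal is correct and follows essentially the same route as the paper: the paper also specializes Theorem \ref{Unified} with exactly your choices of $K=B_{E'}$, $G=\mathbb{K}$, $x_{0}=0$, $R(\varphi,x,\lambda)=|\lambda||\varphi(x)|$ and $S(f,x,\lambda)=|\lambda|\|f(a+x)-f(a)\|$, and handles the one nontrivial step---reinstating the scalar weights from the scalar-free Definition \ref{14nov08}---by the same rational-approximation/repetition argument you describe (which it credits to Farmer--Johnson and Mendel--Schechtman). Your closing remark on why the auxiliary scalar variable cannot be dispensed with is a nice addition not made explicit in the paper.
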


\begin{proof} Using a clever argument from \cite[p. 2]{FaJo} (also credited to M. Mendel and G.
Schechtman), applied by Farmer and Johnson in the context of Lipschitz summing mappings, one can see that $f$
is absolutely $p$-summing at $a$ if and only if there is a $C\geq0$
so that%
\begin{equation}%
{\displaystyle\sum\limits_{j=1}^{m}}
\left\vert b_{j}\right\vert \left\Vert f(a+x_{j})-f(a)\right\Vert ^{p}\leq
C\sup_{\varphi\in B_{E^{\prime}}}%
{\displaystyle\sum\limits_{j=1}^{m}}
\left\vert b_{j}\right\vert \left\vert \varphi(x_{j})\right\vert ^{p}
\label{chaNO}%
\end{equation}
for every $m\in\mathbb{N}$, $x_{1},\ldots,x_{m}\in E$ and scalars $b_{1},\ldots,b_{m}.$ The idea of the
argument is the following: by approximation, it is enough to deal with rationals $b_{i}$ and, by cleaning
denominators, we
can resume to integers $b_{i}.$ Then, for $b_{i}$ (integer) and $x_{1}%
,\ldots,x_{m}$, we consider the new collection of vectors in which each $x_{i}$
is repeated $b_{i}$ times.\\
\indent Putting $X=E,Y$ $=F$,
\[
{\mathcal{H}}=\left\{  f \colon E\longrightarrow F\right\}  ,
\]
$x_{0}=0$, $K=B_{E^{\prime}}$, $G=\mathbb{K}$ and $R$ and $S$ defined by:

\noindent$R\colon B_{E^{\prime}}\times E\times\mathbb{K}\longrightarrow
\lbrack0,\infty),$
\[
\ \ \ \ R(\varphi,x,\lambda)=|\lambda||\varphi(x)|
\]

\noindent$S\colon{\mathcal{H}}\times E\times\mathbb{K}\longrightarrow
\lbrack0,\infty),$
\[
S(f,x,\lambda)=|\lambda|\Vert f(a+x)-f(a)\Vert,
\]
in view of characterization (\ref{chaNO}) it follows that $f$ is absolutely $p$-summing if and only if $f$ is
$R$-$S$ abstract $p$-summing. So, Theorem \ref{Unified} completes the proof. \end{proof}

\bigskip\noindent[Geraldo Botelho] Faculdade de Matem\'atica, Universidade
Federal de Uberl\^andia, 38.400-902 - Uberl\^andia, Brazil, e-mail: botelho@ufu.br.

\medskip

\noindent[Daniel Pellegrino] Departamento de Matem\'atica, Universidade
Federal da Para\'iba, 58.051-900 - Jo\~ao Pessoa, Brazil, e-mail: dmpellegrino@gmail.com.

\medskip

\noindent[Pilar Rueda] Departamento de An\'alisis Matem\'atico, Universidad de
Valencia, 46.100 Burjasot - Valencia, Spain, e-mail: pilar.rueda@uv.es.

\end{document}